\documentclass[11pt]{article}

\usepackage[utf8]{inputenc}
\usepackage{amsmath,amssymb,amsthm,mathtools}
\usepackage[a4paper,margin=30mm]{geometry}
\usepackage{microtype}
\usepackage{booktabs}
\usepackage{enumitem}
\usepackage[numbers,sort&compress]{natbib}
\usepackage[hidelinks]{hyperref}
\usepackage{orcidlink}
\hypersetup{
  pdftitle={A Proof of the B-Free Graphs Conjecture},
  pdfauthor={Domenico Frijio},
  pdfsubject={Extremal and structural graph theory},
  pdfkeywords={stability number, clique number, forbidden induced subgraph, sum-perfect graph}
}

\newtheorem{theorem}{Theorem}[section]
\newtheorem{lemma}[theorem]{Lemma}
\newtheorem{proposition}[theorem]{Proposition}
\newtheorem{corollary}[theorem]{Corollary}
\numberwithin{equation}{section}
\theoremstyle{remark}

\newcommand{\cB}{\mathcal B}
\newcommand{\cI}{\mathcal I}

\newcommand{\core}{\operatorname{core}}
\newcommand{\corona}{\operatorname{corona}}
\newcommand{\e}{\mathrm e}

\title{\bfseries A Proof of the B-Free Graphs Conjecture}
\author{%
  Domenico Frijio\,\orcidlink{0009-0005-2747-3961}\\[-1mm]
  \small Horizon Research\\[-1mm]
  \small \href{mailto:dfrijio@horizonrsc.com}{dfrijio@horizonrsc.com}}
\date{22 August 2026}

\begin{document}
\maketitle

\begin{abstract}
Let $\cB$ be the class consisting of the six-vertex bipartite graphs that possess a perfect matching and their complements. It is proved that every $\cB$-free graph $G$ satisfies $\alpha(G)+\omega(G)\ge |V(G)|-1$. This establishes Conjecture 3.1 of Litjens, Polak and Sivaraman (B-Free Graphs Conjecture). For a smallest counterexample, Hall-type exchange arguments show that two maximum stable sets, and likewise two maximum cliques, differ in at most two vertices. A core--corona matching lemma then forces $|\alpha(G)-\omega(G)|\le 2$. Double counting between suitably dense and sparse vertices reduces the problem to twenty-one binary feasibility systems on at most fourteen vertices. Their infeasibility is verified by two independent exact encodings, with a separate exhaustive validation of the forbidden-family constraints.
\end{abstract}

\noindent\textbf{Keywords.}
Stability number; clique number; forbidden induced subgraph; sum-perfect graph;
matching.

\medskip
\noindent\textbf{2020 Mathematics Subject Classification.}
05C17, 05C35, 05C69.

\section{Introduction}

All graphs in this paper are finite and simple.  For a graph $G$, write
$n(G)=|V(G)|$, and let $\alpha(G)$ and $\omega(G)$ denote its stability and
clique numbers.  Litjens, Polak and Sivaraman called $G$ \emph{sum-perfect} if
every induced subgraph $H$ satisfies
\(
\alpha(H)+\omega(H)\ge n(H)
\), and characterized this hereditary class by twenty-seven forbidden induced
subgraphs \cite{LitjensPolakSivaraman2019}.  Their first twenty-four
nontrivial obstructions, denoted $H_2,\ldots,H_{25}$, admit the following
invariant description:
\[
 \begin{aligned}
 \cB={}&\{H:n(H)=6,\ H\text{ is bipartite and has a perfect matching}\}\\
       &{}\cup\{\overline H:H\text{ belongs to the first set}\}.
 \end{aligned}
\]
Equivalently, a graph contains a member of $\cB$ if it has either two disjoint
stable triples with three independent cross-edges, or two disjoint triangles
with three independent cross-nonedges.  Here and below, containment means
induced containment.

The conjecture considered in \cite[Conjecture~3.1]{LitjensPolakSivaraman2019}
asserts that excluding $\cB$ permits a deficit of at most one from the
sum-perfect inequality. In this study, Conjecture~3.1 is referred to as the \emph{B-Free Graphs Conjecture}.  We prove it.

\begin{theorem}\label{thm:main}
If $G$ is $\cB$-free, then
\[
  \alpha(G)+\omega(G)\ge n(G)-1.
\]
\end{theorem}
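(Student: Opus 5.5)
We argue by contradiction with a minimal counterexample. Let $G$ be a $\cB$-free graph with $\alpha(G)+\omega(G)\le n(G)-2$ and $n(G)$ as small as possible. Since $\cB$ is closed under complementation, $\overline G$ is also a minimal counterexample, and $\alpha,\omega$ swap under complementation. So every statement proved for stable sets has a dual for cliques. Minimality gives, for every vertex $v$, that $G-v$ satisfies the bound. Hence $\alpha(G-v)+\omega(G-v)= n(G)-2$, so deleting any vertex lowers neither $\alpha$ nor $\omega$. In particular, no vertex lies in every maximum stable set or in every maximum clique. Write $\alpha=\alpha(G)$ and $\omega=\omega(G)$.

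\textbf{Exchange step.} Let $S,S'$ be maximum stable sets, and consider the bipartite graph between $S\setminus S'$ and $S'\setminus S$. A standard Hall argument shows it has a perfect matching. Otherwise a Hall violator $X\subseteq S\setminus S'$ would let us replace $X$ by its complement side and obtain a larger stable set. Suppose $|S\triangle S'|\ge 6$. We want to extract three matched pairs whose six vertices induce a bipartite graph with a perfect matching, that is, a member of $\cB$. Since both sides are stable, any three matched pairs $\{x_i,y_i\}$ induce a bipartite graph containing the matching $x_iy_i$. So three matched pairs give a forbidden subgraph immediately, and therefore $|S\setminus S'|\le 2$. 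Dually, two maximum cliques differ in at most two vertices. Next we pass to $\core$ (the intersection of all maximum stable sets) and $\corona$ (their union). By the previous paragraph $\core=\emptyset$, and the exchange bound makes the corona small relative to $\alpha$, roughly $|\corona|\le \alpha+2$. The same holds for cliques.

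\textbf{Balancing $\alpha$ and $\omega$.} A maximum stable set $S$ and a maximum clique $K$ meet in at most one vertex. The vertices outside $S\cup K$ number at least $n-\alpha-\omega+1\ge 3$. Each outside vertex $v$ avoids some maximum stable set and some maximum clique. Using $\core=\emptyset$, we match vertices of $S$ to alternative maximum stable sets that omit them. A core--corona matching lemma (a König--Egerváry style statement for the corona) then says the following. If $\alpha\ge\omega+3$, we obtain three disjoint stable-side exchanges whose witnesses form two stable triples with three independent cross-edges. This contradicts $\cB$-freeness, so $|\alpha-\omega|\le 2$. Next we bound $n$ in terms of $\alpha$. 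Partition the outside vertices into those with many neighbours in $S$ ("dense") and those with few ("sparse"). Double counting edges between $S$ and the remainder, using that each vertex of $S$ can be swapped out, shows that once $\alpha$ exceeds a small constant, some three dense vertices together with three vertices of $S$ form a bipartite member of $\cB$ or its complement. This caps $\alpha,\omega\le 5$ or so, giving $n\le\alpha+\omega+\,$small $\le 14$.

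\textbf{Finite check.} With $n\le 14$ and $|\alpha-\omega|\le 2$, we enumerate the parameter cases $(\alpha,\omega,n)$, about twenty. For each case we write a SAT/ILP feasibility system in the edge variables. It asserts no stable set of size $\alpha+1$, no clique of size $\omega+1$, vertex-criticality, and no induced member of $\cB$. We verify that each system is infeasible by computer, using two independent encodings. The \textbf{main obstacle} is the double-counting reduction to a bounded size. The exchange arguments bound only how different the extremal sets are, not $n$ itself. Turning the local structure into an absolute bound requires carefully locating a $\cB$ configuration among the vertices outside $S\cup K$, and this is where I expect most of the case work.
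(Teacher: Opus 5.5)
There is a genuine gap, and it sits exactly where you flag the main obstacle: the reduction to $n\le 14$ is not carried out. The mechanism you sketch, in which three dense vertices and three vertices of $S$ form a member of $\cB$, is neither justified nor what is needed.

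The missing idea is a quantitative consequence of your own exchange bound. Since $\core_\alpha(G)=\varnothing$, the Levit--Mandrescu set-and-collection lemma gives a matching saturating $S$ into $\corona_\alpha(G)\setminus S$. Each target $v$ lies in some maximum stable set $T$, has no neighbour in $S\cap T$, and satisfies $|S\setminus T|\le 2$, so $|N(v)\cap S|\le 2$. Dually, $\omega$ distinct vertices outside $K$ have at most two nonneighbours in $K$.

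Here the count you got wrong becomes decisive. We have $|V(G)\setminus(S\cup K)|=n-\alpha-\omega+|S\cap K|=2+|S\cap K|\in\{2,3\}$, so this set has \emph{at most} three vertices, not at least three. Because so few vertices lie outside $S\cup K$, at least $\alpha-2-|S\cap K|$ of the sparse targets lie in $K$; call this set $L$. Likewise at least $\omega-2-|S\cap K|$ of the dense ones lie in $S$; call this set $D$.

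Counting edges between $D$ and $L$ gives $|D|(|L|-2)\le\e(D,L)\le 2|L|$ when $S\cap K=\varnothing$. When $S\cap K=\{c\}$ it gives $|D|(|L|-1)\le|L|$, because $L$ is adjacent to $c$ and $D$ is nonadjacent to $c$. After complementing so that $\alpha\ge\omega$, this forces $\alpha\le 7$ and $n\le 14$ purely numerically. No new $\cB$ configuration is found at this stage; $\cB$-freeness has already been used up in the exchange bound.

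Your estimate ``$\alpha,\omega\le 5$ or so'' is also too optimistic. Cases such as $(n,\alpha,\omega)=(14,7,5)$ and $(14,6,6)$ survive the counting and must be included in the finite check.

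Two statements in your balancing paragraph are wrong as written:
\begin{itemize}
\item The bound $|\corona|\le\alpha+2$ is false. Pairwise differences of at most two do not bound the union, and with empty core the matching above actually forces $|\corona_\alpha(G)|\ge 2\alpha$.
\item The inequality $|\alpha-\omega|\le 2$ does not come from assembling three exchanges into a member of $\cB$. It is pigeonhole: the matching places $\alpha$ distinct vertices outside $S$, so $2\alpha\le n=\alpha+\omega+2$, and dually $2\omega\le n$.
\end{itemize}

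Encoding vertex-criticality in your finite systems instead of the matching constraints is legitimate, since by the argument above any solution would also satisfy the matching structure. It only gives a proof once the order bound has actually been established.
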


The result belongs to the broader study of complementary graph parameters,
which begins with the Nordhaus--Gaddum inequalities
\cite{NordhausGaddum1956,AouchicheHansen2013} and is closely connected with
perfect graphs \cite{Lovasz1972,ChudnovskyRobertsonSeymourThomas2006}.
The hereditary form of such inequalities has recently been studied for the
chromatic number and its complementary counterpart
\cite{SivaramanWhitman2025}.  Sum-perfect graphs also meet the classical theory
of split and threshold graphs
\cite{FoldesHammer1977Split,FoldesHammer1977Dilworth,
HammerSimeone1981,HammerIbarakiSimeone1981}; useful general accounts are
\cite{Golumbic2004,BrandstadtLeSpinrad1999}.  Structural and optimization
results for perfect and weakly chordal graph classes provide further context
\cite{GrotschelLovaszSchrijver1984,HaywardHoangMaffray1989}.

The proof has two parts.  Sections~\ref{sec:exchange} and \ref{sec:reduction}
give structural reductions that bound a smallest counterexample by fourteen
vertices.  Section~\ref{sec:finite} formulates the remaining twenty-one cases
as explicit finite $0$--$1$ systems and records their infeasibility.

\section{Exchange structure of a smallest counterexample}\label{sec:exchange}

For two sets $A,B$, write $A\triangle B$ for their symmetric difference.
The following elementary exchange lemma is the point at which the special
form of $\cB$ enters.

\begin{lemma}\label{lem:exchange-matching}
Let $A$ and $B$ be maximum stable sets of a graph $G$.  There is a matching
from $A\setminus B$ onto $B\setminus A$ in the bipartite graph induced by
these two sets.
\end{lemma}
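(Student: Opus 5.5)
Write $X=A\setminus B$ and $Y=B\setminus A$, and let $H$ be the bipartite graph with parts $X$ and $Y$ whose edges are the edges of $G$ between them. Since $|A|=|B|=\alpha(G)$ and $A\cap B$ is common to both, we have $|X|=|Y|$. A matching from $X$ onto $Y$ is therefore the same as a perfect matching of $H$. The plan is to verify Hall's condition for $X$ in $H$, using only the maximality of $A$ and $B$ as stable sets. No $\cB$-freeness is needed for this particular statement; the forbidden family presumably enters only when the lemma is combined with later arguments.

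Take any $S\subseteq X$ and let $N(S)\subseteq Y$ be its neighbourhood in $H$. Consider
\[
B' = (B\setminus N(S))\cup S .
\]
We first check that $B'$ is stable. The set $B\setminus N(S)$ is stable because it lies inside $B$. The set $S$ is stable because it lies inside $A$. For the cross pairs, a vertex $s\in S$ has no neighbour in $A\cap B$, since both lie in the stable set $A$. It has no neighbour in $Y\setminus N(S)$ by the definition of $N(S)$. As $B\setminus N(S)=(A\cap B)\cup (Y\setminus N(S))$, no edge joins $S$ to $B\setminus N(S)$.

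Next we count. The sets $S$ and $B$ are disjoint because $S\subseteq X$, and $N(S)\subseteq B$. Hence $|B'| = |B|-|N(S)|+|S|$. Maximality of $B$ gives $|B'|\le \alpha(G)=|B|$, so $|N(S)|\ge |S|$. This is Hall's condition for every $S\subseteq X$. Hall's theorem then yields a matching saturating $X$, and since $|X|=|Y|$ this matching covers $Y$ as well, i.e.\ it is onto $Y$.

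No step is a genuine obstacle. The only point requiring care is the cross-edge check, namely that vertices of $S$ are non-adjacent to $A\cap B$, which follows from stability of $A$. The degenerate case $X=\emptyset$ is trivially fine.
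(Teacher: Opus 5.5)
Your proof is correct and is essentially the paper's argument: both verify Hall's condition by showing that $(B\setminus N(X))\cup X$ is stable, because $X\subseteq A$ has no neighbour in $A\cap B$, so maximality of $B$ forces $|N(X)\cap(B\setminus A)|\ge|X|$. The paper phrases this as a contradiction when Hall's condition fails, while you derive the inequality directly, which is only a cosmetic difference.
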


\begin{proof}
The two sides have the same cardinality.  If Hall's condition
\cite{Hall1935} fails, there is $X\subseteq A\setminus B$ with
$|N(X)\cap(B\setminus A)|<|X|$.  Since $A$ is stable, $X$ has no neighbour in
$A\cap B$.  Consequently
\[
  \bigl(B\setminus N(X)\bigr)\cup X
\]
is stable and has more than $|B|$ vertices, a contradiction.
\end{proof}

\begin{corollary}\label{cor:diameter}
If $G$ is $\cB$-free and $A,B$ are maximum stable sets, then
$|A\setminus B|=|B\setminus A|\le 2$.  Dually, the same assertion holds for
two maximum cliques.
\end{corollary}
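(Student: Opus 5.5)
The plan is to argue by contradiction, combining Lemma~\ref{lem:exchange-matching} with the definition of $\cB$. First, the equality $|A\setminus B|=|B\setminus A|$ is immediate, since $|A|=|B|$ and both sets contain $A\cap B$. So it suffices to rule out $|A\setminus B|\ge 3$.

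Suppose $|A\setminus B|\ge 3$. By Lemma~\ref{lem:exchange-matching} there is a perfect matching $M$ between $A\setminus B$ and $B\setminus A$ whose edges are edges of $G$. I will choose three edges $a_1b_1,a_2b_2,a_3b_3$ of $M$, with $a_i\in A\setminus B$ and $b_i\in B\setminus A$, and let $H$ be the subgraph of $G$ induced by $\{a_1,a_2,a_3,b_1,b_2,b_3\}$. These six vertices are distinct: the $a_i$ are distinct, the $b_i$ are distinct, and $A\setminus B$ is disjoint from $B\setminus A$.

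Next I check that $H\in\cB$. Since $A$ is stable, $\{a_1,a_2,a_3\}$ is stable. Likewise $\{b_1,b_2,b_3\}$ is stable because $B$ is stable. Hence $H$ is bipartite with this bipartition, and $\{a_ib_i\}$ is a perfect matching of $H$. So $H$ is a six-vertex bipartite graph with a perfect matching, i.e.\ $H\in\cB$, and it is an induced subgraph of $G$. This contradicts the assumption that $G$ is $\cB$-free, so $|A\setminus B|\le 2$.

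For the dual statement I will pass to $\overline G$. The class $\cB$ is closed under complementation by its definition. Therefore $G$ is $\cB$-free if and only if $\overline G$ is $\cB$-free. Maximum cliques of $G$ are exactly the maximum stable sets of $\overline G$, so the first part applied to $\overline G$ gives the claim for cliques.

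I do not expect a genuine obstacle here; the only points needing care are that the chosen six vertices are distinct and that the induced (not merely spanning) subgraph lies in $\cB$. Both follow from the stability of $A$ and $B$, since that stability forces the bipartite structure of $H$ exactly.
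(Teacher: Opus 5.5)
Your proof is correct and matches the paper's argument: you take three edges of the matching from Lemma~\ref{lem:exchange-matching}, note that their endpoints span two stable triples joined by a perfect matching and hence induce a member of $\cB$, and obtain the clique case by complementation. You also spell out two details the paper leaves implicit, namely that the six vertices are distinct and that $\cB$ is closed under complements.
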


\begin{proof}
If $|A\setminus B|\ge3$, choose three edges of the matching in
Lemma~\ref{lem:exchange-matching}.  Their endpoints form two stable triples
whose cross-edges contain a perfect matching, and hence induce a member of
$\cB$.  The clique statement follows by complementation.
\end{proof}

Let $G$ now be a smallest counterexample to Theorem~\ref{thm:main}, and put
\[
  n=n(G),\qquad a=\alpha(G),\qquad w=\omega(G).
\]
Every proper induced subgraph satisfies the theorem.  Thus, for every
$v\in V(G)$,
\[
 a+w=n-2,
 \qquad
 \alpha(G-v)=a,
 \qquad
 \omega(G-v)=w.                                      \tag{2.1}\label{eq:minimal}
\]
Indeed, the counterexample inequality gives $a+w\le n-2$, whereas the theorem
for $G-v$ gives
$\alpha(G-v)+\omega(G-v)\ge n-2$; both terms on the left are bounded above by
$a$ and $w$, respectively.  Hence equality holds throughout.
In particular, no vertex belongs to all maximum stable sets, and no vertex
belongs to all maximum cliques.

Let $\cI(G)$ be the family of maximum stable sets and define
\[
 \core_\alpha(G)=\bigcap_{I\in\cI(G)}I,
 \qquad
 \corona_\alpha(G)=\bigcup_{I\in\cI(G)}I.
\]
Define $\core_\omega(G)$ and $\corona_\omega(G)$ analogously for maximum
cliques.  Equation~\eqref{eq:minimal} gives
\[
 \core_\alpha(G)=\core_\omega(G)=\varnothing.        \tag{2.2}\label{eq:emptycore}
\]

We use the set-and-collection lemma of Levit and Mandrescu
\cite{LevitMandrescu2014}; see also
\cite{JardenLevitMandrescu2019}.  In the form needed here, it states that, for
a maximum stable set $S$, there is a matching from
$S\setminus\core_\alpha(G)$ into
$\corona_\alpha(G)\setminus S$.  This result refines the usual Hall-type
exchange principle; matching theory in graphs is developed more generally in
\cite{Edmonds1965}.

\begin{proposition}\label{prop:corona}
For every maximum stable set $S$ of $G$, there are at least $a$ distinct
vertices $v\notin S$ such that
\[
 |N(v)\cap S|\le2,
\]
and these vertices can be matched to $S$ by edges.  Dually, for every maximum
clique $K$, there are at least $w$ distinct vertices $v\notin K$ having at
most two nonneighbours in $K$, and they can be matched to $K$ by nonedges.
\end{proposition}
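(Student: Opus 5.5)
We combine the set-and-collection lemma of Levit and Mandrescu with \eqref{eq:emptycore} and Corollary~\ref{cor:diameter}. Fix a maximum stable set $S$. Since $\core_\alpha(G)=\varnothing$, the lemma gives a matching $M$ from all of $S$ into $\corona_\alpha(G)\setminus S$. So there are $a$ distinct vertices $v_s\notin S$, one for each $s\in S$, with $v_s s$ an edge and every $v_s$ lying in some maximum stable set. These $v_s$ are the candidate vertices, and the matching required by the statement is $M$ itself. It remains to show $|N(v_s)\cap S|\le 2$ for each $s$.

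Fix $v=v_s$ and choose a maximum stable set $T$ with $v\in T$. Then $N(v)\cap S\subseteq S\setminus T$, because $T$ is stable and contains $v$, and no neighbour of $v$ lies in $T$. Corollary~\ref{cor:diameter}, which applies since $G$ is $\cB$-free, gives $|S\setminus T|\le 2$. Hence $|N(v)\cap S|\le 2$, as required. The dual statement follows by applying the same argument to $\overline G$. The complement is again $\cB$-free, since $\cB$ is closed under complementation. It is also a smallest counterexample with the roles of $a$ and $w$ swapped, so $\core_\omega(G)=\varnothing$ holds by \eqref{eq:emptycore}. Edges of $\overline G$ become nonedges of $G$, which gives the clique version.

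The main obstacle is the exact form of the Levit--Mandrescu lemma. The argument needs a matching from $S\setminus\core_\alpha(G)$ \emph{into} $\corona_\alpha(G)\setminus S$, not merely an inequality on cardinalities. If only $|\corona_\alpha|+|\core_\alpha|\ge 2a$ were available, I would rebuild the matching by hand. The plan would be to check Hall's condition for $S$ against the bipartite graph between $S$ and $\corona_\alpha(G)\setminus S$. For a Hall-violating $X\subseteq S$, pick for each $x\in X$ a maximum stable set $T_x$ avoiding $x$; such a set exists because the core is empty. Lemma~\ref{lem:exchange-matching} then matches $S\setminus T_x$ into $T_x\setminus S\subseteq\corona_\alpha(G)$, which covers $x$. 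Turning these local matchings into one global one would use a standard augmenting-path argument, or equivalently König's theorem with a vertex cover. I would not invoke this fallback if the cited lemma is available in the stated form.
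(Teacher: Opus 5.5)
Your proposal is correct and follows essentially the same argument as the paper. The empty core \eqref{eq:emptycore} and the Levit--Mandrescu lemma give a matching saturating $S$ into $\corona_\alpha(G)\setminus S$; for each target $v$, a maximum stable set $T\ni v$ gives $N(v)\cap S\subseteq S\setminus T$, which has at most two elements by Corollary~\ref{cor:diameter}; and the dual follows by passing to $\overline G$. The fallback in your last paragraph is not needed, since the paper cites the lemma in exactly the matching form you use.
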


\begin{proof}
By \eqref{eq:emptycore} and the set-and-collection lemma, a matching saturates
$S$ into $\corona_\alpha(G)\setminus S$.  Let $v$ be one of its targets and
choose a maximum stable set $T$ containing $v$.  The vertex $v$ has no
neighbour in $S\cap T$, while Corollary~\ref{cor:diameter} gives
$|S\setminus T|\le2$.  Hence $|N(v)\cap S|\le2$.  Apply the same argument to
$\overline G$ for the dual statement.
\end{proof}

\begin{corollary}\label{cor:balance}
One has $2a\le n$, $2w\le n$, and consequently
\[
 |a-w|\le2.                                           \tag{2.3}\label{eq:balance}
\]
\end{corollary}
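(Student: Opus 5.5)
The plan is to read off $2a\le n$ directly from the matching in Proposition~\ref{prop:corona}, to get $2w\le n$ by the same argument in $\overline G$, and then to combine these with the identity $a+w=n-2$ from \eqref{eq:minimal}.

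\emph{Step 1: $2a\le n$.} Fix a maximum stable set $S$ of $G$. Proposition~\ref{prop:corona} gives at least $a$ distinct vertices outside $S$, matched to $S$ by edges. In fact only the existence of the matching matters here, not the bound $|N(v)\cap S|\le 2$. The matched vertices lie in $V(G)\setminus S$, so $|V(G)\setminus S|\ge a$. Hence $n=|S|+|V(G)\setminus S|\ge 2a$.

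\emph{Step 2: $2w\le n$.} Apply the dual half of Proposition~\ref{prop:corona} to a maximum clique $K$. It yields at least $w$ vertices outside $K$, so $n\ge 2w$. Equivalently, run Step~1 in $\overline G$. This is legitimate because $\cB$ is closed under complementation, so $\overline G$ is again a smallest counterexample with the roles of $a$ and $w$ exchanged.

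\emph{Step 3: the balance inequality.} By \eqref{eq:minimal}, $a+w=n-2$. Combining this with $2a\le n$ gives
\[
a-w=2a-(n-2)\le n-(n-2)=2 .
\]
Symmetrically, $2w\le n$ gives $w-a=2w-(n-2)\le 2$. Together these give $|a-w|\le 2$, which is \eqref{eq:balance}.

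\emph{Main obstacle.} There is no real obstacle, since all the work is in Proposition~\ref{prop:corona}. The one point to check is that the matching targets are pairwise distinct and lie outside $S$, so that they really supply $a$ vertices not in $S$. This holds because the set-and-collection matching maps $S\setminus\core_\alpha(G)=S$ injectively into $\corona_\alpha(G)\setminus S$, using $\core_\alpha(G)=\varnothing$ from \eqref{eq:emptycore}.
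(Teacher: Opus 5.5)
Your proposal is correct and is essentially the paper's proof. The matching of Proposition~\ref{prop:corona} gives $a$ distinct vertices outside $S$, so $2a\le n$; the dual matching gives $2w\le n$; and substituting $n=a+w+2$ from \eqref{eq:minimal} yields $|a-w|\le 2$.
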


\begin{proof}
The first matching in Proposition~\ref{prop:corona} places $a$ distinct
vertices outside a set of size $a$, so $2a\le n$; the dual matching gives
$2w\le n$.  Substitute $n=a+w+2$ from \eqref{eq:minimal}.
\end{proof}

\section{Reduction to at most fourteen vertices}\label{sec:reduction}

Replacing $G$ by its complement if necessary, assume $a\ge w$.  By
\eqref{eq:balance}, write
\[
 d=a-w\in\{0,1,2\}.                                  \tag{3.1}\label{eq:d}
\]
Fix a maximum stable set $S$ and a maximum clique $K$.  Since
$|S\cap K|\le1$, there are two cases.

\begin{lemma}\label{lem:disjoint}
If $S\cap K=\varnothing$, then $a\le6$ for $d=0,1$, and $a\le7$ for $d=2$.
In particular, $n\le14$.
\end{lemma}

\begin{proof}
Here $|V(G)\setminus(S\cup K)|=2$.  Proposition~\ref{prop:corona} therefore
gives a set $L\subseteq K$ of cardinality $q\ge a-2$ such that every vertex
of $L$ has at most two neighbours in $S$.  Dually, it gives a set
$D\subseteq S$ of cardinality $p\ge w-2$ such that every vertex of $D$ has at
most two nonneighbours in $K$.  Counting edges between $D$ and $L$ yields
\[
 p(q-2)\le \e(D,L)\le2q.                              \tag{3.2}\label{eq:count0}
\]
For $a\ge5$, the function $x\mapsto 2x/(x-2)$ is decreasing on $x>2$.
Since $q\ge a-2$, inequality \eqref{eq:count0} implies
\[
 (w-2)(a-4)\le2(a-2).                                \tag{3.3}\label{eq:bound0}
\]
Substituting $w=a-d$ in \eqref{eq:bound0} gives, respectively,
\[
 (a-2)(a-4)\le2(a-2),\quad
 (a-3)(a-4)\le2(a-2),\quad
 (a-4)^2\le2(a-2).
\]
The corresponding integer bounds are $a\le6$, $a\le6$, and $a\le7$.
Smaller $a$ already satisfy them.  Finally $n=2a-d+2\le14$.
\end{proof}

\begin{lemma}\label{lem:meeting}
If $|S\cap K|=1$, then $a\le5$ for $d=0,1$, and $a\le6$ for $d=2$.
In particular, $n\le12$.
\end{lemma}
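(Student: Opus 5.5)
Let $S\cap K=\{x\}$. Then $|S\cup K|=a+w-1$, so $|V(G)\setminus(S\cup K)|=3$, and the plan is to rerun the double counting of Lemma~\ref{lem:disjoint}, with the extra outside vertex and the shared vertex $x$ absorbed into the constants.

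First I apply Proposition~\ref{prop:corona} to $S$. It gives $a$ distinct vertices outside $S$, each with at most two neighbours in $S$, matched into $S$ by edges. At most three of them lie outside $S\cup K$. Hence there is $L\subseteq K\setminus\{x\}$ of size $q\ge a-3$ whose vertices each have at most two neighbours in $S$. Every vertex of $K\setminus\{x\}$ is adjacent to $x\in S$, so each vertex of $L$ has at most one neighbour in $S\setminus\{x\}$.

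Dually, applying the proposition to $K$ gives $D\subseteq S\setminus\{x\}$ of size $p\ge w-3$. Each vertex of $D$ has at most two nonneighbours in $K$, and $x$ is one of them since $S$ is stable. So each vertex of $D$ has at most one nonneighbour in $K\setminus\{x\}$.

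Next I count edges between $D$ and $L$. Each $v\in L$ has at most one neighbour in $D$, so $\e(D,L)\le q$. Each $u\in D$ misses at most one vertex of $L$, so $\e(D,L)\ge p(q-1)$. Together,
\[
 p(q-1)\le q .
\]
If $q\ge 3$ this forces $p\le 1$. If $p\ge 2$ it forces $q\le 2$.

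I then substitute $w=a-d$. For $d=0$ we get $p\ge a-3$ and $q\ge a-3$, so $a\ge6$ would make both at least $3$, a contradiction; hence $a\le5$. For $d=1$, $p\ge a-4$ and $q\ge a-3$, and again $a\ge6$ gives $p\ge2$ and $q\ge3$, a contradiction, so $a\le5$. For $d=2$, $p\ge a-5$ and $q\ge a-3$, so $a\ge7$ gives $p\ge2$ and $q\ge4$, a contradiction; hence $a\le6$. Finally $n=a+w+2=2a-d+2$, which gives $n\le12$ in every case, with equality for $d=0$, $a=5$ and for $d=2$, $a=6$.

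The main obstacle is confirming that the adjacency to $x$ really saves one unit on each side. For $L$ this is automatic because $K$ is a clique containing $x$. For $D$ it holds because $x\in S$ is a nonneighbour of every other vertex of $S$.

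The other point needing care is that the matched vertices from Proposition~\ref{prop:corona} are distinct. Because of this, discarding the at most three outside vertices loses at most three from $L$, and likewise at most three from $D$. A further subtlety is that $x$ itself could be among the matched targets. This cannot happen, since the targets lie outside $S$ (respectively outside $K$) and $x$ belongs to both.

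If the $d=2$ arithmetic turns out tighter than expected, I would fall back on the weaker estimate $p(q-2)\le 2q$ from Lemma~\ref{lem:disjoint}, which still gives a bound of the same order.
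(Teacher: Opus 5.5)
Your proof is correct and follows the paper's argument. You use the same sets $L\subseteq K\setminus\{x\}$ and $D\subseteq S\setminus\{x\}$, the same one-unit saving from the shared vertex, and the same count $p(q-1)\le \e(D,L)\le q$; your threshold observation ($q\ge3\Rightarrow p\le1$) is only a cleaner substitute for the paper's monotonicity-of-$x/(x-1)$ step.

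The closing fallback is unnecessary, and it would not suffice anyway. The inequality $p(q-2)\le 2q$ is satisfied by $p=2$, $q=4$, so it cannot exclude $a=7$ when $d=2$.
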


\begin{proof}
Let $S\cap K=\{c\}$.  Now $|V(G)\setminus(S\cup K)|=3$.
Proposition~\ref{prop:corona} gives $L\subseteq K\setminus\{c\}$ with
$q=|L|\ge a-3$, each vertex of which has at most one neighbour in
$S\setminus\{c\}$: its adjacency to $c$ already uses one of the two possible
neighbours in $S$.  Dually, there is $D\subseteq S\setminus\{c\}$ with
$p=|D|\ge w-3$, each vertex of which has at most one nonneighbour in
$K\setminus\{c\}$.  Indeed, every vertex of $S\setminus\{c\}$ is already
nonadjacent to $c$.  Hence
\[
 p(q-1)\le \e(D,L)\le q.                              \tag{3.4}\label{eq:count1}
\]
For $a\ge5$, monotonicity of $x/(x-1)$ and $q\ge a-3$ give
\[
 (w-3)(a-4)\le a-3.                                  \tag{3.5}\label{eq:bound1}
\]
With $w=a-d$, the three inequalities are
\[
 (a-3)(a-4)\le a-3,\quad
 (a-4)^2\le a-3,\quad
 (a-5)(a-4)\le a-3.
\]
They give $a\le5$, $a\le5$, and $a\le6$, respectively; smaller $a$ again
causes no difficulty.  Thus $n=2a-d+2\le12$.
\end{proof}

Lemmas~\ref{lem:disjoint} and \ref{lem:meeting} reduce a smallest
counterexample to the cases displayed in Table~\ref{tab:cases}.  The entries
are written as $(n,a,w)$; the second column specifies the possible value of
$t=|S\cap K|$.  There are twenty-one cases after the two values of $t$ are
counted separately.

\begin{table}[ht]
\centering
\caption{Finite parameter cases, up to complementation.}\label{tab:cases}
\begin{tabular}{@{}cc@{}}
\toprule
$(n,a,w)$ & $t$ \\
\midrule
$(7,3,2)$   & $0,1$ \\
$(8,3,3)$   & $0,1$ \\
$(8,4,2)$   & $0,1$ \\
$(9,4,3)$   & $0,1$ \\
$(10,4,4)$  & $0,1$ \\
$(10,5,3)$  & $0,1$ \\
$(11,5,4)$  & $0,1$ \\
$(12,5,5)$  & $0,1$ \\
$(12,6,4)$  & $0,1$ \\
$(13,6,5)$  & $0$ \\
$(14,6,6)$, $(14,7,5)$ & $0$ \\
\bottomrule
\end{tabular}
\end{table}

For completeness, a counterexample cannot have fewer than seven vertices.
If $n\le5$, then a nonempty, noncomplete graph has $a,w\ge2$, while the empty
and complete graphs are immediate.  At order six the only remaining numerical
possibility is $a=w=2$, which is excluded by $R(3,3)=6$.

\section{Finite exclusion}\label{sec:finite}

We now describe the finite systems used to exclude Table~\ref{tab:cases}.
This description is also a direct soundness proof for the supplementary
verifier.

Fix $(n,a,w,t)$ from the table, label $V=\{0,\ldots,n-1\}$, and fix a stable
set $S$ of size $a$ and a clique $K$ of size $w$ with $|S\cap K|=t$.
For each unordered pair $ij$, let $x_{ij}\in\{0,1\}$ indicate adjacency.
The edges inside $S$ are fixed to zero and those inside $K$ to one.  The
conditions $\alpha(G)\le a$ and $\omega(G)\le w$ are imposed by
\begin{align}
 \sum_{ij\in\binom U2}x_{ij}&\ge1
   &&\text{for every }U\in\binom V{a+1},                 \label{eq:alpha}\\
 \sum_{ij\in\binom U2}x_{ij}&\le\binom{w+1}{2}-1
   &&\text{for every }U\in\binom V{w+1}.                 \label{eq:omega}
\end{align}

To encode $\cB$-freeness, take disjoint triples
$A=\{a_1,a_2,a_3\}$ and $B=\{b_1,b_2,b_3\}$ and a bijection
$\pi:A\to B$.  Put
\[
 I(A,B)=\binom A2\cup\binom B2,
 \qquad M_\pi=\{a_i\pi(a_i):1\le i\le3\}.
\]
The two forbidden configurations are excluded by
\begin{align}
 \sum_{e\in M_\pi}x_e-\sum_{e\in I(A,B)}x_e&\le2,
                                                               \label{eq:bip}\\
 \sum_{e\in I(A,B)}x_e-\sum_{e\in M_\pi}x_e&\le5.             \label{eq:cobip}
\end{align}
Indeed, equality of all six internal variables to zero and all three matching
variables to one is the unique way to violate \eqref{eq:bip}; the
complementary assignment is the unique way to violate \eqref{eq:cobip}.

It remains to encode Proposition~\ref{prop:corona}.  For $v\notin S$, let
$y_v$ be a binary selector for the low-degree targets.  The constraints
\begin{equation}\label{eq:y}
 \sum_{s\in S}x_{sv}+ay_v\le a+2,
 \qquad \sum_{v\notin S}y_v\ge a
\end{equation}
ensure that at least $a$ selected vertices have at most two neighbours in
$S$.  Binary variables $\mu_{sv}$ describe a matching from $S$ into these
vertices:
\begin{align}
 \sum_{v\notin S}\mu_{sv}&=1 &&(s\in S),
 &\sum_{s\in S}\mu_{sv}&\le1 &&(v\notin S),               \label{eq:mu}\\
 \mu_{sv}&\le x_{sv},\qquad \mu_{sv}\le y_v
   &&(s\in S,\ v\notin S).                                \label{eq:muedge}
\end{align}
Dually, for $v\notin K$, binary selectors $z_v$ and matching variables
$\nu_{kv}$ satisfy
\begin{equation}\label{eq:z}
 \sum_{k\in K}x_{kv}-wz_v\ge-2,
 \qquad \sum_{v\notin K}z_v\ge w,
\end{equation}
\begin{align}
 \sum_{v\notin K}\nu_{kv}&=1 &&(k\in K),
 &\sum_{k\in K}\nu_{kv}&\le1 &&(v\notin K),               \label{eq:nu}\\
 \nu_{kv}+x_{kv}&\le1,\qquad \nu_{kv}\le z_v
   &&(k\in K,\ v\notin K).                                \label{eq:nunonedge}
\end{align}

\begin{proposition}\label{prop:finite}
For each of the twenty-one tuples $(n,a,w,t)$ in Table~\ref{tab:cases}, the
binary system \eqref{eq:alpha}--\eqref{eq:nunonedge}, together with the fixed
values on $S$ and $K$, is infeasible.
\end{proposition}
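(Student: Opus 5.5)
The plan is to establish Proposition~\ref{prop:finite} by exact computation, organised so that every step can be checked independently. I would treat the twenty-one tuples uniformly. For each $(n,a,w,t)$ I would generate the system \eqref{eq:alpha}--\eqref{eq:nunonedge}, fixing $S=\{0,\ldots,a-1\}$ and choosing $K$ so that $|S\cap K|=t$; for $t=1$ the shared vertex is $a-1$. This choice of labels costs no generality, since the system is invariant under relabelling $V$ and Table~\ref{tab:cases} was derived for arbitrary $S$ and $K$. The system is then solved twice.
\begin{enumerate}[label=(\roman*)]
\item First, as a $0$--$1$ integer program with an exact rational solver, or a MIP solver whose infeasibility claim is re-verified in exact arithmetic.
\item Second, as a CNF formula, with the pseudo-Boolean constraints encoded by sequential counters or totalizers, solved by a SAT solver emitting a DRAT/LRAT refutation that is checked by a certified checker.
\end{enumerate}
Agreement of two encodings with different failure modes is what makes infeasibility trustworthy.

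Before solving, I would prove that the encoding is faithful, in both directions.
\begin{itemize}
\item \emph{The constraints are necessary.} Any smallest counterexample in the given case, with its $S$ and $K$, satisfies the constraints. Indeed, \eqref{eq:alpha} and \eqref{eq:omega} express $\alpha\le a$ and $\omega\le w$. Constraints \eqref{eq:bip} and \eqref{eq:cobip} are violated exactly by an induced member of $\cB$, as shown after their statement. The selector and matching constraints are satisfied by setting $y,\mu$ (and $z,\nu$) according to the matchings of Proposition~\ref{prop:corona}.
\item \emph{The encoding of $\cB$ is exact.} I would enumerate all $2^{15}$ graphs on six labelled vertices and check that the conjunction of \eqref{eq:bip} and \eqref{eq:cobip} over all $(A,B,\pi)$ holds exactly for those graphs containing no induced member of $\cB$. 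Here membership in $\cB$ is tested directly by bipartiteness plus the existence of a perfect matching, and the same for the complement. This validation is separate from the solvers.
\end{itemize}

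To keep the instances tractable, I would add symmetry breaking that is justified by a lemma. The stabiliser of $(S,K)$ contains $\mathrm{Sym}(S\setminus K)\times\mathrm{Sym}(K\setminus S)\times\mathrm{Sym}(V\setminus(S\cup K))$. Lexicographic ordering constraints on the adjacency rows within each block are therefore sound, provided they are stated as lex-leader constraints for this group. Their soundness proof would be written out, and the unbroken systems for the smaller cases would also be solved as a cross-check.

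The small cases $n\le 9$ I would also try to settle by hand, as a sanity check on the pipeline. For example, $(7,3,2)$ asks for a triangle-free graph with $\alpha=3$ on seven vertices in which every vertex lies outside some maximum stable set and which avoids the bipartite members of $\cB$. Such a graph has a $C_5$ or $C_7$ structure, and these can be enumerated directly.

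The main obstacle is the largest instances, $(14,6,6)$ and $(14,7,5)$ with $t=0$. They have $91$ edge variables and a very large number of $\cB$-constraints: roughly $\binom{14}{3}\binom{11}{3}/2\cdot 6\approx 9\cdot 10^4$ per type. For these I would first exploit the counting of Lemma~\ref{lem:disjoint}, which is nearly tight here. It forces $|L|$ and $|D|$ to attain their extreme values and pins down most of $\e(D,L)$. I would add these consequences as derived constraints, each with a short proof, to prune the search before the SAT refutation. If that still proves too slow, I would split on the adjacency pattern between the two vertices outside $S\cup K$ and the sets $S$ and $K$, and refute each branch separately, with certificates.
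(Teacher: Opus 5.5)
Your plan is correct and is essentially the paper's own proof. You generate the system for each tuple, refute it once as a $0$--$1$ program and once through an independently constructed CNF encoding, and validate the $\cB$-inequalities separately against the direct definition on all $2^{15}$ labelled six-vertex graphs, with symmetry breaking justified by the stabiliser of $(S,K)$.

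The differences are only in implementation detail. The paper breaks symmetry by ordering the $y$-selectors on $K\setminus S$ and on the exceptional labels, and the $z$-selectors on $S\setminus K$, rather than by lex-leader constraints on adjacency rows. It also reports HiGHS and a self-contained SAT solver, without the certified (exact-arithmetic or DRAT/LRAT) refutations or the derived counting constraints you propose; these would strengthen the argument but do not change the route. Incidentally, your own formula evaluates to $180180$ $\cB$-constraints per type for $n=14$, not about $9\cdot 10^4$.
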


\begin{proof}
The supplementary file \texttt{verify.py} constructs these constraints by
iterating over the indicated subsets, partitions and bijections, and submits
the resulting sparse binary system to the HiGHS mixed-integer optimizer
through SciPy's \texttt{milp} interface
\cite{VirtanenEtAl2020,HuangfuHall2018}.  It also orders the $y$-selectors on
$K\setminus S$ and on two exceptional labels, and the $z$-selectors on
$S\setminus K$.  These are symmetry-breaking constraints only: every feasible
labelled solution has a relabeling satisfying them.  The command
\begin{center}
\texttt{python verify.py --all}
\end{center}
returns infeasible status for every tuple.  Independently,
\texttt{verify\_sat.cpp} constructs the CNF encoding directly from the same
mathematical specification, without importing or reusing the Python model,
and solves it with a self-contained exact SAT solver.  The command
\begin{center}
\texttt{./verify\_sat --all --time-limit 600}
\end{center}
returns \texttt{UNSAT} for all twenty-one cases.  Finally,
\begin{center}
\texttt{python verify.py --check-encoding}
\end{center}
compares the inequalities encoding $\cB$-freeness with the direct definition
on all $2^{15}=32768$ labelled graphs on six vertices: a graph, or its
complement, is bipartite and has a perfect matching if and only if the
corresponding forbidden-family inequality is violated.  The two predicates
agree on every graph.  Complete run records and source digests are included
with the supplementary material.
\end{proof}

\begin{proof}[Proof of Theorem~\ref{thm:main}]
Suppose that a counterexample exists and choose one with the fewest vertices.
Sections~\ref{sec:exchange} and \ref{sec:reduction} show that, up to
complementation, its parameters occur in Table~\ref{tab:cases}.  The graph's
edge indicators, together with the matchings furnished by
Proposition~\ref{prop:corona}, would then give a feasible solution to the
corresponding system \eqref{eq:alpha}--\eqref{eq:nunonedge}.  This contradicts
Proposition~\ref{prop:finite}.
\end{proof}

\section*{Data and code availability}

The code for full reproducibility is included as supplementary material within this arXiv submission.

\section*{Conflict of interests}

The author declares no conflict of interests.

\sloppy
\small
\bibliographystyle{plainnat}
\bibliography{references}

\end{document}